\newtheorem{theorem}{Theorem}[section]
\newtheorem{lemma}[theorem]{Lemma}
\newtheorem{corollary}[theorem]{Corollary}
\newtheorem{remmark}[theorem]{Remmark}
\theoremstyle{definition}
 \journal{}
\begin{document}

 \begin{frontmatter}



  \fntext[lable3]{Corresponding
author. \textit{E-mail
addresses: \small{m$_-$nabiei@sbu.ac.ir}}}

 \title{Complex symmetric operators acting between two different Hilbert spaces}

 \author{\small{Mona Nabiei.$^{1}$ 
}}

 \address{$^{1}$Department of Mathematics, Shahid Beheshti University, G. C. P.O. Box 19839, Tehran, IRAN
}

 \begin{abstract}
This paper will initiate a study on the class of complex symmetric operators acting between two different Hilbert space. Among other things, we compute the closure of $CSO_u$ and $CSO_b$ with respect to the several topologies.
 \end{abstract}

 \begin{keyword}
Complex symmetric operator, strong operator topology, weak operator topology, closed operator, conjugation pair, bi-holomorphic. 
 
 \emph{MSC: Primary 30D55; 46C07; 47A05.}


 \end{keyword}

 \end{frontmatter}

\section{Introduction }
In linear algebra, a symmetric matrix is a square matrix that is equal to its transpose. But, in operator theory, a symmetric operator on a complex Hilbert space $H$ is an operator that is its own adjoint. So, if $H$ is finite-dimensional with a given orthonormal basis, this is equivalent to the condition that the matrix of $A$ is Hermitian, i.e., equal to its conjugate transpose $A^*$. 

But, the study of complex symmetric operators has been flourished near the intersection of operator theory and complex analysis. The term complex symmetric stems from the fact that $T$ is a complex symmetric operator on a Hilbert space $H$ if and only if there is an orthonormal basis of $H$ with respect to which $T$ has a symmetric matrix representation with complex entries.  

About half a century age, Glazman laid the foundations for the theory of unbounded complex symmetric operators \cite{fofez, sofez}. Since then, his fundamental ideas have been successfully tested on several classes of differential operators \cite{seofez, twofez, twseofez}. The general study of complex symmetric operators was undertaken by S.R. Garcia in \cite{garfdef, gardef, gar5}.

One of his work, that is of interest in this paper, is on the norm closure problem for bounded complex symmetric operators. He proves that the set of all bounded complex symmetric operators on a separable, infinite-dimensional Hilbert space is not norm closed \cite{garclo, ez}. Recently many other researchers have obtained some results for the set of all complex symmetric operators. For example, it is shown that this set is transitive and 2-hyperreflexive with constant 1 \cite{gp}.

In this article, we look at this from a different point of view. At first, we define a suitable metric $d$ on $C(H,K)$, the set of all closed densely-defined linear operators from Hilbert space $H$ to finite dimensional Hilbert space $K$. Already, metric geometric properties, such as compactness of the class of admissible sets, metric convexity, and normal structure are examined \cite{mon}. Then, we introduce the notion of complex symmetric operator (not necessarily bounded) acting between two different Hilbert spaces and study the closure $\overline{CSO}$ of the set $CSO$ of all densely-defined complex symmetric operators from Hilbert space $H$ to finite dimensional Hilbert space $K$, with respect to this new effective metric.

\section{Preliminary Notes}

Let $X$ be a complex Banach space and let $B$ be the open unit ball in $X$. Let the Poincar\'e metric $\omega$ on $\Delta$, the open unit disc in the complex plane $C$, be given by
$$\omega(a,b)=\tanh^{-1}\frac{|a-b|}{|1-\bar{a}b|} \ \ \ \ \ |a|, |b|<1.$$

Let $x,y$ be two points of $B$. An analytic chain joining $x$ and $y$ in $B$ consists of $2n$ points $z'_1,z''_1,...,z'_n,z''_n$ in $\Delta$, and of $n$ holomorphic functions $f_k:\Delta\rightarrow B$, such that 
$$f_1(z'_1)=x, \dots , f_k(z''_k)=f_{k+1}(z'_{k+1}) \ \ \ \ for~ k=1, \dots , n-1,\ \ f_n(z''_n)=y.$$
Since $B$ is connected, given $x$ and $y$, an analytic chain joining $x$ and $y$ in $B$ always exists, provided that $n$ is sufficiently large. Let
$$K_B(x,y)=\inf\{\omega(z'_1,z''_1)+\omega(z'_2,z''_2)+\dots +\omega(z'_n,z''_n)\},$$
where the infimum is taken over all choices of analytic chains joining $x$ and $y$ in $B$. It is called the Kobayashi pseudo-metric on $B$. 

The next theorems from \cite{fran} will be used later on.
\vspace{0.5 cm}
\begin{theorem}\label{iso}
Let $B$ be the open unit ball of the complex Banach space $X$. If $F:B\rightarrow B$ is a bi-holomorphic map, then
$$K_B(F(x),F(y))=K_B(x,y),\ \ \ for~all~x,y\in B.$$
\end{theorem}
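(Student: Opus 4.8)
The plan is to deduce the equality from the fundamental non-expansiveness (distance-decreasing) property of the Kobayashi pseudo-metric, applied symmetrically to $F$ and to its holomorphic inverse. The invariance is not proved directly from the chain definition in one shot; instead I would isolate the behaviour of $K_B$ under an arbitrary holomorphic self-map and then exploit that $F$ and $F^{-1}$ are both such maps.

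First I would establish the key lemma: every holomorphic map $G:B\rightarrow B$ satisfies $K_B(G(x),G(y))\le K_B(x,y)$ for all $x,y\in B$. This follows straight from the definition of $K_B$ via analytic chains. Fix any analytic chain joining $x$ and $y$ in $B$, given by points $z'_1,z''_1,\dots,z'_n,z''_n\in\Delta$ and holomorphic functions $f_k:\Delta\rightarrow B$ with $f_1(z'_1)=x$, $f_k(z''_k)=f_{k+1}(z'_{k+1})$, and $f_n(z''_n)=y$. Set $g_k:=G\circ f_k:\Delta\rightarrow B$. Each $g_k$ is holomorphic, and applying $G$ to the chain relations gives $g_1(z'_1)=G(x)$, $g_k(z''_k)=g_{k+1}(z'_{k+1})$, and $g_n(z''_n)=G(y)$, so the $g_k$ together with the \emph{same} disc-points $z'_k,z''_k$ form an analytic chain joining $G(x)$ and $G(y)$ in $B$. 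Since the Poincar\'e terms $\omega(z'_k,z''_k)$ are computed in $\Delta$ and are untouched by composition with $G$, this chain has exactly the same cost $\sum_{k}\omega(z'_k,z''_k)$ as the original one. As $K_B(G(x),G(y))$ is the infimum of such costs, we get $K_B(G(x),G(y))\le\sum_{k}\omega(z'_k,z''_k)$, and taking the infimum over all chains joining $x$ and $y$ yields the lemma.

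Then I would apply the lemma twice. Applying it to $F$ gives $K_B(F(x),F(y))\le K_B(x,y)$. Since $F$ is bi-holomorphic, $F^{-1}:B\rightarrow B$ is also holomorphic, and applying the lemma to $F^{-1}$ at the arguments $F(x)$ and $F(y)$ gives
$$K_B(x,y)=K_B\bigl(F^{-1}(F(x)),F^{-1}(F(y))\bigr)\le K_B(F(x),F(y)).$$
Combining the two inequalities yields $K_B(F(x),F(y))=K_B(x,y)$. The main obstacle is entirely contained in the non-expansiveness lemma, and the only point requiring care there is the verification that composing an analytic chain with $G$ again produces a valid chain of the \emph{same} Poincar\'e length; once that is in place, the bi-holomorphic invariance is a purely formal two-sided inequality argument.
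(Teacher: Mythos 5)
Your proof is correct: the non-expansiveness of $K_B$ under arbitrary holomorphic self-maps (obtained by composing each chain with $G$, which preserves the Poincar\'e cost of the chain) combined with the two-sided application to $F$ and $F^{-1}$ is exactly the standard argument. Note that the paper itself offers no proof of this statement---it is quoted from Franzoni--Vesentini \cite{fran}---and your argument is precisely the one given there, so there is nothing to add or correct.
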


\begin{theorem}\label{all metric}
Let $B$ be the open unit ball of a complex Banach space $X$. Then
$$K_B(0,x)=\omega(0,\|x\|).$$
\end{theorem}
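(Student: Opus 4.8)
The plan is to establish the identity by proving the two inequalities $K_B(0,x)\le\omega(0,\|x\|)$ and $K_B(0,x)\ge\omega(0,\|x\|)$ separately; the case $x=0$ is trivial, so I assume $x\neq 0$. The upper bound will come from exhibiting a single good competitor in the infimum defining $K_B$, while the lower bound will come from pushing an arbitrary analytic chain down into the disc $\Delta$ by a norm-one linear functional and invoking the Schwarz--Pick lemma.

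For the upper bound I would use the one-link chain ($n=1$) built from the linear map $f\colon\Delta\to X$ given by $f(\lambda)=\lambda\,x/\|x\|$. Since $\|f(\lambda)\|=|\lambda|<1$, this $f$ is a holomorphic map $\Delta\to B$, and setting $z_1'=0$, $z_1''=\|x\|$ gives $f(z_1')=0$ and $f(z_1'')=x$ (note $\|x\|<1$ because $x\in B$). By the definition of $K_B$ as an infimum over chains, this single chain already yields $K_B(0,x)\le\omega(0,\|x\|)$.

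For the lower bound I would first apply the Hahn--Banach theorem to produce $\phi\in X^{*}$ with $\|\phi\|=1$ and $\phi(x)=\|x\|$. Because $\|\phi\|=1$, the restriction $\phi|_B$ is a holomorphic map $B\to\Delta$: for every $b\in B$ one has $|\phi(b)|\le\|b\|<1$. Now take \emph{any} analytic chain $z_1',z_1'',\dots,z_n',z_n''$ with holomorphic $f_1,\dots,f_n\colon\Delta\to B$ joining $0$ and $x$ in $B$, and compose: each $\phi\circ f_k\colon\Delta\to\Delta$ is a holomorphic self-map of the disc, and the chaining relations $f_1(z_1')=0$, $f_k(z_k'')=f_{k+1}(z_{k+1}')$, $f_n(z_n'')=x$ transform into $(\phi\circ f_1)(z_1')=0$, $(\phi\circ f_k)(z_k'')=(\phi\circ f_{k+1})(z_{k+1}')$, $(\phi\circ f_n)(z_n'')=\|x\|$. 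Writing $a_0=0$ and $a_k=(\phi\circ f_k)(z_k'')$ so that $a_n=\|x\|$, the triangle inequality for $\omega$ together with the Schwarz--Pick estimate $\omega\big((\phi\circ f_k)(z_k'),(\phi\circ f_k)(z_k'')\big)\le\omega(z_k',z_k'')$ gives
$$\omega(0,\|x\|)=\omega(a_0,a_n)\le\sum_{k=1}^{n}\omega(a_{k-1},a_k)\le\sum_{k=1}^{n}\omega(z_k',z_k'').$$
Taking the infimum over all chains yields $\omega(0,\|x\|)\le K_B(0,x)$, and combining the two bounds finishes the proof.

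The step I expect to require the most care is the lower bound: one must verify that the pushed-forward maps $\phi\circ f_k$ genuinely land in $\Delta$ (which is exactly where $\|\phi\|=1$ is used) and that the telescoping of the chain under the triangle inequality lines up correctly with the Schwarz--Pick contraction applied link-by-link. The existence of the norm-attaining functional $\phi$ via Hahn--Banach is the other essential ingredient; without it the problem cannot be reduced to the one-dimensional Schwarz--Pick computation on $\Delta$.
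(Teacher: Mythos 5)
Your proof is correct. Note that the paper itself gives no proof of this statement: it is quoted verbatim from Franzoni--Vesentini \cite{fran} as a known fact about the Kobayashi pseudo-metric. The argument you supply is the standard one and both halves check out. The upper bound via the single linear link $f(\lambda)=\lambda x/\|x\|$ with endpoints $z_1'=0$, $z_1''=\|x\|$ is valid because $\|x\|<1$ places $z_1''$ in $\Delta$ and $\|f(\lambda)\|=|\lambda|<1$ keeps the image in $B$. The lower bound is also sound: Hahn--Banach supplies a norm-one functional $\phi$ with $\phi(x)=\|x\|$, the estimate $|\phi(b)|\le\|b\|<1$ shows $\phi$ maps $B$ into $\Delta$, and the telescoping of the chain relations under $\phi$ combined with the Schwarz--Pick contraction $\omega\bigl((\phi\circ f_k)(z_k'),(\phi\circ f_k)(z_k'')\bigr)\le\omega(z_k',z_k'')$ and the triangle inequality for $\omega$ gives $\omega(0,\|x\|)\le K_B(0,x)$ after passing to the infimum. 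This is essentially the proof one finds in the cited monograph, so there is nothing to repair; if anything, you could remark that the upper-bound argument alone shows $K_B(0,x)\le\omega(0,\|x\|)$ holds for the Carath\'eodory pseudo-distance as well, and that the functional $\phi$ realizes equality between the two on pairs $(0,x)$.
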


Let $H$, $K$ be complex Hilbert spaces. We denote the open unit ball of $B(K,H)$, the space of all bounded linear operators from $K$ to $H$, by $\mathcal{B}$.   For each $A \in\mathcal{B}$, we define a transformation $\eta$ on $\mathcal{B}$ setting
\begin{equation}\label{linear transformation}
\eta(Z)=(I-AA^*)^{-\frac{1}{2}}(Z+A)(I+A^*Z)^{-1}(I-A^*A)^{\frac{1}{2}}.
\end{equation}
We collect the facts about this  transformation that we need in the following lemma.
\vspace{0.5 cm}
\begin{lemma}\label{llt} If $\eta$ is as in (\ref{linear transformation}), then $\eta$ has the following properties:

\begin{itemize}
\item [(i)]$\eta$ is invertible and its inverse is given by
$$\eta^{-1}(Z)=(I-AA^*)^{-\frac{1}{2}}(Z-A)(I-A^*Z)^{-1}(I-A^*A)^{\frac{1}{2}}.$$
\item[(ii)] $\eta$ is a bi-holomorphic mapping on $\mathcal{B}$.
\item[(iii)] If $\dim(K)<\infty$, then $\eta$ is WOT-continuous.
\end{itemize}
\end{lemma}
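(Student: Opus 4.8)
The plan is to treat $\eta$ as the operator analogue of the disc automorphism $z\mapsto(z+a)/(1+\bar a z)$ and to reduce every assertion to the two intertwining relations coming from the functional calculus, namely $A\,f(A^*A)=f(AA^*)\,A$ and $f(A^*A)\,A^*=A^*f(AA^*)$ for continuous $f$ on $[0,1]$; applied to $f(t)=(1-t)^{\pm1/2}$ these yield $(I-AA^*)^{\pm1/2}A=A(I-A^*A)^{\pm1/2}$ and their adjoints. First I would record that $\eta$ is well defined on $\mathcal B$: since $\|A\|<1$ and $\|Z\|<1$ we have $\|A^*Z\|<1$, so $I+A^*Z$ is invertible, while $I-AA^*$ is positive and invertible, so $(I-AA^*)^{-1/2}$ exists; a check of domains shows $\eta(Z)\in B(K,H)$. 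Throughout I write $P=(I-AA^*)^{-1/2}$ and $Q=(I-A^*A)^{1/2}$, so that $\eta(Z)=P(Z+A)(I+A^*Z)^{-1}Q$.

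For (i), I would observe that the displayed formula for $\eta^{-1}$ is exactly $\eta$ with $A$ replaced by $-A$: the defect operators are unchanged under $A\mapsto-A$, while $Z+A$ becomes $Z-A$ and $I+A^*Z$ becomes $I-A^*Z$. Hence it suffices to verify $\eta_{-A}\circ\eta_A=\mathrm{id}$, which is a direct computation: setting $W=\eta_A(Z)$ and pushing the defect operators through the products with the intertwining relations collapses $\eta_{-A}(W)$ to $Z$. Conceptually each $\eta_A$ is the linear-fractional transformation induced by a fixed $J$-unitary $2\times2$ operator matrix on $H\oplus K$, so composition corresponds to matrix multiplication and inversion to the inverse matrix; I would mention this as the reason the bookkeeping must succeed.

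For (ii), holomorphy is formal: $Z\mapsto I+A^*Z$ is affine, operator inversion is holomorphic on the open set of invertibles, and sums and products of holomorphic maps are holomorphic, so $\eta$ (and likewise $\eta^{-1}=\eta_{-A}$) is holomorphic on $\mathcal B$. What remains, and is the main obstacle, is that $\eta$ actually carries $\mathcal B$ into $\mathcal B$, i.e. $\|\eta(Z)\|<1$. The key is the defect identity
\[
I-\eta(Z)^*\eta(Z)=(I-A^*A)^{1/2}(I+Z^*A)^{-1}(I-Z^*Z)(I+A^*Z)^{-1}(I-A^*A)^{1/2},
\]
which I would derive by expanding $\eta(Z)^*\eta(Z)$, replacing $(I-AA^*)^{-1}$ by $P^2$, and repeatedly applying the intertwining relations to move all defect operators to the outside. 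Granting it, the right-hand side is a congruence $X^*(I-Z^*Z)X$ with $X=(I+A^*Z)^{-1}(I-A^*A)^{1/2}$ invertible and $I-Z^*Z>0$ since $\|Z\|<1$; hence $I-\eta(Z)^*\eta(Z)>0$ and $\|\eta(Z)\|<1$. The same applied to $\eta_{-A}$ shows $\eta^{-1}$ preserves $\mathcal B$, so with (i) we conclude $\eta\colon\mathcal B\to\mathcal B$ is a biholomorphism. The difficulty is purely the algebra of the defect identity, for which the scalar case $1-|\phi_a(z)|^2=(1-|a|^2)(1-|z|^2)/|1+\bar a z|^2$ is the guide.

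For (iii), assume $\dim K<\infty$ and let $Z_\lambda\to Z$ in $\mathcal B$ for WOT. The finite dimension of $K$ enters once, decisively: from $\langle A^*Z_\lambda k,k'\rangle=\langle Z_\lambda k,Ak'\rangle\to\langle Zk,Ak'\rangle$ for all $k,k'\in K$ we get entrywise convergence of the matrices $A^*Z_\lambda\to A^*Z$ in $B(K)$, which on a finite-dimensional space is norm convergence; since inversion is norm-continuous on invertibles, $(I+A^*Z_\lambda)^{-1}\to(I+A^*Z)^{-1}$ in norm. Writing $R_\lambda=(I+A^*Z_\lambda)^{-1}Q\to R$ in norm, I would then compute, for fixed $k\in K$ and $h\in H$,
\[
\langle\eta(Z_\lambda)k,h\rangle=\langle Z_\lambda R_\lambda k,Ph\rangle+\langle AR_\lambda k,Ph\rangle,
\]
and split the first term as $\langle Z_\lambda(R_\lambda-R)k,Ph\rangle+\langle(Z_\lambda-Z)Rk,Ph\rangle$. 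The moving-vector term is controlled by $\|Z_\lambda\|<1$ and $\|(R_\lambda-R)k\|\to0$, while $\langle(Z_\lambda-Z)Rk,Ph\rangle\to0$ is plain WOT convergence against the fixed vector $Rk$; the second summand converges because $R_\lambda k\to Rk$. Hence $\langle\eta(Z_\lambda)k,h\rangle\to\langle\eta(Z)k,h\rangle$, proving WOT-continuity. It is precisely the passage to norm convergence of the inverse factor, impossible for WOT in general, that forces the finite-dimensionality hypothesis.
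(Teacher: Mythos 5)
Your proposal is correct, and for parts (ii) and (iii) it takes a genuinely more self-contained route than the paper. For (i) you and the paper do essentially the same thing: recognize the displayed inverse as $\eta$ with $A$ replaced by $-A$ and verify one composition is the identity by pushing the defect operators through the products; the paper packages the intertwining step as the single commutation identity $(Z-A)(I-A^*A)^{-1}(I-A^*Z)=(I-ZA^*)(I-AA^*)^{-1}(Z-A)$ and then computes $I+A^*Y$ explicitly, which is the same bookkeeping you describe. For (ii) the paper offers no argument at all beyond a citation to Theorem~2 of Harris, whereas you reduce everything to the defect identity
$I-\eta(Z)^*\eta(Z)=(I-A^*A)^{1/2}(I+Z^*A)^{-1}(I-Z^*Z)(I+A^*Z)^{-1}(I-A^*A)^{1/2}$;
this identity is in fact correct --- expanding $(I+Z^*A)(I-A^*A)^{-1}(I+A^*Z)-(Z^*+A^*)(I-AA^*)^{-1}(Z+A)$ and applying your intertwining relations collapses it to $I-Z^*Z$ --- but since it carries the whole weight of the range assertion you should actually perform that expansion rather than leave it at ``granting it.'' For (iii) the paper again defers to a remark (for $\dim K<\infty$ the norm topology on $B(K,H)$ is the strong operator topology and WOT is the weak topology) plus a citation to Krein; your argument --- entrywise, hence norm, convergence of $A^*Z_\lambda$ in the finite-dimensional algebra $B(K)$, norm-continuity of inversion there, and the three-term splitting of $\langle\eta(Z_\lambda)k,h\rangle$ using the uniform bound $\|Z_\lambda\|<1$ --- supplies the details the paper omits and isolates exactly where finite-dimensionality of $K$ is indispensable. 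The trade-off is the usual one: the paper's citations are shorter and lean on classical sources, while your route is verifiable in place and makes the mechanism (defect identities plus functional-calculus intertwining) explicit.
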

\textbf{Proof.}
To prove statement (i), take any $Z\in\mathcal{B}$ and insert
$$Y=(I-AA^*)^{-\frac{1}{2}}(Z-A)(I-A^*Z)^{-1}(I-A^*A)^{\frac{1}{2}}.$$
We shall show  that $\eta(Y)=Z$. To do this we should note that
\begin{equation}\label{commut}
Y=(I-AA^*)^{\frac{1}{2}}(I-ZA^*)^{-1}(Z-A)(I-A^*A)^{-\frac{1}{2}}.
\end{equation}
For, (\ref{commut}) is equivalent to the identity
$$(Z-A)(I-A^*A)^{-1}(I-A^*Z)=(I-ZA^*)(I-AA^*)^{-1}(Z-A),$$
which can be easily verified by a straightforward calculation. 
Now, we evaluate the operator $I+A^*Y$:
\begin{eqnarray}
I+A^*Y&=&I+A^*(I-AA^*)^{\frac{1}{2}}(I-ZA^*)^{-1}(Z-A)(I-A^*A)^{-\frac{1}{2}}\nonumber\\
~&=&I+(I-A^*A)^{\frac{1}{2}}(I-A^*Z)^{-1}(A^*Z-A^*A)(I-A^*A)^{-\frac{1}{2}}\nonumber\\
~&=&I-I+(I-A^*A)^{\frac{1}{2}}(I-A^*Z)^{-1}(I-A^*A)^{\frac{1}{2}}\nonumber\\
~&=&(I-A^*A)^{\frac{1}{2}}(I-A^*Z)^{-1}(I-A^*A)^{\frac{1}{2}}.\nonumber
\end{eqnarray} 
From this equation we see at once that
\begin{eqnarray}
\eta(Y)&=&(I-AA^*)^{-\frac{1}{2}}(Y+A)(I-A^*A)^{-\frac{1}{2}}(I-A^*Z)\nonumber\\
~&=&[Z(I-A^*Z)^{-1}-ZA^*(I-ZA^*)^{-1}A](I-A^*A)^{-1}(I-A^*Z)\nonumber\\
~&=&Z(I-A^*Z)^{-1}[I-A^*A](I-A^*A)^{-1}(I-A^*Z)\nonumber\\
~&=&Z.\nonumber
\end{eqnarray}

Statement (ii) was shown in theorem 2 of \cite{harr}.
To prove statement (iii): It is easy to check that if $K$ is finite dimensional, then the norm topology of $B(K,H)$ coincides with the strong operator topology while WOT coincides with the weak topology of $B(K,H)$. In this case, the WOT-continuity of $\eta$ was noticed and used by Krein \cite{krei}.
 
~~~~~~~~~~~~~~~~~~~~~~~~~~~~~~~~~~~~~~~~~~~~~~~~~~~~~~~~~~~~~~~~~~~~~~~~~~~~~~~~~~~~~~~~~~~~~~~~~~~~~~~~~~~~~~~~~~~~~~~~~~~~~~~~~~~~~~~~~~\fbox\\
\section{The metric space $C(H,K)$}
We denote the space of all closed densely-defined linear operators from $H$ to $K$ by $C(H,K)$. The first relaxation in the concept of operator is to not assume that the operators are defined everywhere on $H$. Hence,  densely-defined operator $T:H\rightarrow K$ is a linear function whose domain of definition is dense linear subspace $\mathcal{D}(T)$  in $H$. 
$T$ is closed if its graph, $\mathcal{G}(T)$, is a closed subset of space $H\oplus K$ \cite{schm}.

Let $T\in C(H,K)$ and define $L_T$ and $R_T$ settings as: 
$$L_T(X)=(I+T^*T)^{\frac{1}{2}}X^*-T^*(I+XX^*)^\frac{1}{2},$$
$$R_T(X)=(I+XX^*)^\frac{1}{2}(I+TT^*)^\frac{1}{2}-XT^*.$$

Consider operator $X$ such that the compositions are well-defined. Using lemma 1.10 of Schm\"udgen \cite{schm}, recall that $\mathcal{G}(T^*)=V(\mathcal{G}(T))^{\perp}$ where $V(x,y)=(-y,x)$, $x\in H$, $y\in K$. Hence, $K\bigoplus H=\mathcal{G}(T^*)\bigoplus V(\mathcal{G}(T))$. Therefore, for each $u\in H$, there exist $x\in\mathcal{D}(T)$ and $y\in\mathcal{D}(T^*)$ such that $y-Tx=0$, $T^*y+x=u$. That is, $I+T^*T$ is surjective.  $T^*T$ is a positive self-adjoint operator and, for $x\in \mathcal{D}(T^*T)$: 
$$\|(I+T^*T)x\|^2=\|x\|^2+\|T^*Tx\|^2+2\|Tx\|^2,$$
hence, $I+T^*T$ is a bijective mapping with a positive bounded self-adjoint inverse on $H$ such that:
$$0\leq (I+T^*T)^{-1}\leq I.$$  

On the other hand: 
$$(I+T^*T)^{-1}(H)=\mathcal{D}(I+T^*T)=\mathcal{D}(T^*T),$$
hence:
\begin{eqnarray}
\|T(I+T^*T)^{-1}x\|^2 &=& \langle T^*T(I+T^*T)^{-1}x,(I+T^*T)^{-1}x\rangle  \nonumber\\
~&\leq & \langle T^*T(I+T^*T)^{-1}x,(I+T^*T)^{-1}x\rangle \nonumber\\
~&~& \ \ \ \   +  \langle (I+T^*T)^{-1}x,(I+T^*T)^{-1}x\rangle \nonumber\\
~&= &  \langle(I+T^*T)(I+T^*T)^{-1}x,(I+T^*T)^{-1}x\rangle \nonumber\\
~&=&\langle x,(I+T^*T)^{-1}x\rangle =\|(I+T^*T)^{-\frac{1}{2}}x\|^2,  \nonumber
\end{eqnarray}
that is:\\
\centerline{$\|T(I+T^*T)^{-\frac{1}{2}}y\|\leq\|y\|$\ \  \ for $y\in (I+T^*T)^{-\frac{1}{2}}(H)$.}
Since $(I+T^*T)^{-\frac{1}{2}}$ is bijective, $(I+T^*T)^{-\frac{1}{2}}H$ is dense in $H$. Operator $T(I+T^*T)^{-\frac{1}{2}}$ is closed since $T$ is closed and $(I+T^*T)^{-\frac{1}{2}}$ is bounded \cite{schm}. This implies that $\mathcal{D}(T(I+T^*T)^{-\frac{1}{2}})=H$, and $\|T(I+T^*T)^{-\frac{1}{2}}\|\leq1$. A similar argument shows that $\|(I+T^*T)^{-\frac{1}{2}}T^*\|\leq 1$; however, if $K$ is finite dimensional, operator $TT^*$ is bounded and 
$$\|T(I+T^*T)^{-\frac{1}{2}}\|^2=\|TT^*(I+TT^*)^{-1}\|=\frac{\|TT^*\|}{1+\|TT^*\|}<1.$$

It is known that $\|f(TT^*)\|=f(\|TT^*\|)$ if function $f$ is non-decreasing on the interval $[0,\|TT^*\|]$ and $TT^*$ is a positive operator; thus, if $K$ is a finite dimensional Hilbert space, the inverse of the operator $R_S(T)$ exists, and: 
$$R_S(T)^{-1}=(I+SS^*)^{-\frac{1}{2}}[I-T(I+T^*T)^{-\frac{1}{2}}(I+S^*S)^{-\frac{1}{2}}S^*]^{-1}(I+TT^*)^{-\frac{1}{2}}.$$ 

\begin{remmark}
From the above it follows that if $K$ is finite dimensional and $T\in C(H,K)$, then $\hat{T}=(I+T^*T)^{-\frac{1}{2}}T^*\in\mathcal{B}$. If $A\in\mathcal{B}$, then $\ker(I-A^*A)={0}$. Because, if $A^*A(x)=x$ then 
$$\langle A^*A(x),x\rangle=\langle x,x\rangle,$$
which meanse $\|A(x)\|=\|x\|$ and $x$ must be zero. So, $A_0=(I-A^*A)^{-\frac{1}{2}}A^*$ is a closed densely-defined operator from $H$ to $K$, such that $\hat{A_0}=A$.
\end{remmark}
\vspace{0.5 cm}
\begin{lemma}\label{mobius}
If  $T\in C(H,K)$, then $\psi_T$, by the following definition, is of the form of (\ref{linear transformation}), $\psi_T(\hat{T})=0$, $\psi_T(0)=-\hat{T}$ and $\psi^{-1}_T=\psi_{-T}$.\\ For each $X\in \mathcal{B}$,  
$\psi_T (X)=L_T(Y)R_Y(T)^{-1}$ where $Y=(1-X^*X)^{-\frac{1}{2}}X^*$.
\end{lemma}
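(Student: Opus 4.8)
The plan is to prove that $\psi_T$ coincides with the transformation $\eta$ of (\ref{linear transformation}) for the single choice of parameter $A=-\hat T\in\mathcal B$; once this is done, every assertion of the lemma drops out of Lemma \ref{llt}. First I would observe that the operator $Y=(I-X^*X)^{-\frac{1}{2}}X^*$ built from $X$ is nothing but the closed operator associated to $X$ in the Remark, so that $\hat Y=X$. Indeed, the intertwining relation $(I-X^*X)^{-\frac{1}{2}}X^*=X^*(I-XX^*)^{-\frac{1}{2}}$ gives $Y^*=(I-XX^*)^{-\frac{1}{2}}X=X(I-X^*X)^{-\frac{1}{2}}$, while a direct functional-calculus computation yields $I+YY^*=(I-X^*X)^{-1}$ and $I+Y^*Y=(I-XX^*)^{-1}$; hence $(I+YY^*)^{\frac{1}{2}}=(I-X^*X)^{-\frac{1}{2}}$ and $\hat Y=(I+Y^*Y)^{-\frac{1}{2}}Y^*=X$.

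The second, and computationally central, step is to substitute these identities into $L_T(Y)$ and $R_Y(T)$ and to pull out a common right factor. Using $Y^*=X(I-X^*X)^{-\frac{1}{2}}$ together with $(I+YY^*)^{\frac{1}{2}}=(I-X^*X)^{-\frac{1}{2}}$, one finds
\[
L_T(Y)=\bigl[(I+T^*T)^{\frac{1}{2}}X-T^*\bigr](I-X^*X)^{-\frac{1}{2}},\quad
R_Y(T)=\bigl[(I+TT^*)^{\frac{1}{2}}-TX\bigr](I-X^*X)^{-\frac{1}{2}}.
\]
Because the common right factor $(I-X^*X)^{-\frac{1}{2}}$ is invertible it cancels in the quotient, so that
\[
\psi_T(X)=\bigl[(I+T^*T)^{\frac{1}{2}}X-T^*\bigr]\bigl[(I+TT^*)^{\frac{1}{2}}-TX\bigr]^{-1}.
\]

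It then remains to recognise this fractional-linear expression as $\eta(X)$ with $A=-\hat T$. From $\hat T=(I+T^*T)^{-\frac{1}{2}}T^*$ and the intertwining identities $Tf(T^*T)=f(TT^*)T$ one gets $I-\hat T\hat T^*=(I+T^*T)^{-1}$, $I-\hat T^*\hat T=(I+TT^*)^{-1}$, and the two key reductions $(I-\hat T\hat T^*)^{-\frac{1}{2}}\hat T=T^*$ and $(I-\hat T^*\hat T)^{-\frac{1}{2}}\hat T^*=T$. Substituting these into (\ref{linear transformation}) for $A=-\hat T$ and grouping factors gives $(I-\hat T\hat T^*)^{-\frac{1}{2}}(X-\hat T)=(I+T^*T)^{\frac{1}{2}}X-T^*$ and $(I-\hat T^*\hat T)^{-\frac{1}{2}}(I-\hat T^*X)=(I+TT^*)^{\frac{1}{2}}-TX$, whence $\eta(X)$ collapses to exactly the expression just displayed. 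Thus $\psi_T=\eta$ with $A=-\hat T$, which is of the form (\ref{linear transformation}). The remaining statements are now immediate: since $\eta(0)=A$ and $\eta(-A)=0$ we obtain $\psi_T(0)=-\hat T$ and $\psi_T(\hat T)=0$; and by Lemma \ref{llt}(i) the inverse of $\eta$ is the transformation of the same form with $A$ replaced by $-A$, so $\psi_T^{-1}$ has parameter $\hat T$. Since $\widehat{-T}=-\hat T$, the parameter $-\widehat{-T}$ of $\psi_{-T}$ is also $\hat T$, and therefore $\psi_T^{-1}=\psi_{-T}$.

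The difficulty here is bookkeeping rather than ideas. One must justify the intertwining identities $Tf(T^*T)=f(TT^*)T$ on the correct domains for the unbounded $T$, check that each composition entering $L_T(Y)$ and $R_Y(T)$ is well defined, and above all guarantee that $R_Y(T)^{-1}$ exists and is bounded. This last point is precisely where the finite-dimensionality of $K$ is used, via the strict estimate $\|T(I+T^*T)^{-\frac{1}{2}}\|<1$ and the explicit formula for $R_S(T)^{-1}$ recorded before the Remark; it is what makes $\psi_T$ a genuine map of $\mathcal B$ into itself rather than a purely formal expression.
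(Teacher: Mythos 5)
Your proposal is correct and follows essentially the same route as the paper: both arguments reduce $\psi_T(X)=L_T(Y)R_Y(T)^{-1}$ (via $\hat Y=X$ and cancellation of the common factor $(I-X^*X)^{-\frac{1}{2}}$) to the expression $[(I+T^*T)^{\frac{1}{2}}X-T^*][(I+TT^*)^{\frac{1}{2}}-TX]^{-1}$, identify it with the transformation (\ref{linear transformation}) for $A=-\hat T$ using $I-\hat T\hat T^*=(I+T^*T)^{-1}$ and its companions, and then read off $\psi_T(\hat T)=0$, $\psi_T(0)=-\hat T$, $\psi_T^{-1}=\psi_{-T}$ from Lemma \ref{llt}. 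Your closing remarks on the domains, the intertwining identities, and the invertibility of $R_Y(T)$ correctly locate the supporting facts in the discussion preceding the Remark.
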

\textbf{Proof.}
It is easy to see that $\psi_T$ is of the form of (\ref{linear transformation}), because:

\leftline{$\psi_T(X)=L_T(Y)R_Y(T)^{-1}$}
\leftline{$=\{(I+T^*T)^\frac{1}{2}Y^*-T^*(I+YY^*)^\frac{1}{2}\}\{(I+TT^*)^\frac{1}{2}(I+YY^*)^\frac{1}{2}-TY^*\}^{-1}$ }
\leftline{$=\{(I+T^*T)^\frac{1}{2}Y^*-T^*(I+YY^*)^\frac{1}{2}\}(I+YY^*)^{-\frac{1}{2}}\{(I+TT^*)^\frac{1}{2}-T\hat{Y}\}^{-1}$ }
\leftline{$=\{(I+T^*T)^\frac{1}{2}\hat{Y}-T^*\}\{(I+TT^*)^\frac{1}{2}-T\hat{Y}\}^{-1}$ }
\leftline{$=\{[(I+T^*T)^{-1}(I+T^*T-T^*T)]^{-\frac{1}{2}}\hat{Y}-T^*\}\{(I+TT^*)^\frac{1}{2}-T\hat{Y}\}^{-1}$ }
\leftline{$=\{[I-\hat{T}\hat{T}^*]^{-\frac{1}{2}}\hat{Y}-T^*\}\{(I+TT^*)^\frac{1}{2}-T\hat{Y}\}^{-1}$ }
\leftline{$=\{(I-\hat{T}\hat{T}^*)^{-\frac{1}{2}}\hat{Y}-[I+T^*T-T^*T]^{\frac{1}{2}}T^*\}\{(I+TT^*)^\frac{1}{2}-T\hat{Y}\}^{-1}$ }
\leftline{$=\{(I-\hat{T}\hat{T}^*)^{-\frac{1}{2}}\hat{Y}-(I-\hat{T}\hat{T}^*)^{-\frac{1}{2}}\hat{T}\}\{(I+TT^*)^\frac{1}{2}-T\hat{Y}\}^{-1}$ }
\leftline{$=(I-\hat{T}\hat{T}^*)^{-\frac{1}{2}}(\hat{Y}-\hat{T})\{(I+TT^*)^\frac{1}{2}-T\hat{Y}\}^{-1}$ }
\leftline{$=(I-\hat{T}\hat{T}^*)^{-\frac{1}{2}}(\hat{Y}-\hat{T})\{I-(I+TT^*)^{-\frac{1}{2}}T\hat{Y}\}^{-1}(I+TT^*)^{-\frac{1}{2}}$ }
\leftline{$=(I-\hat{T}\hat{T}^*)^{-\frac{1}{2}}(\hat{Y}-\hat{T})\{I-\hat{T}^*\hat{Y}\}^{-1}(I+TT^*)^{-\frac{1}{2}}$ }
\leftline{$=(I-\hat{T}\hat{T}^*)^{-\frac{1}{2}}(\hat{Y}-\hat{T})(I-\hat{T}^*\hat{Y})^{-1}\{(I+TT^*)^{-1}[I+TT^*-TT^*]\}^\frac{1}{2}$ }
\leftline{$=(I-\hat{T}\hat{T}^*)^{-\frac{1}{2}}(\hat{Y}-\hat{T})(I-\hat{T}^*\hat{Y})^{-1}\{I-\hat{T}^*\hat{T}\}^\frac{1}{2}$ }
\leftline{$=(1-\hat{T}\hat{T}^*)^{-\frac{1}{2}}(X-\hat{T})(1-\hat{T}^*X)^{-1}(1-\hat{T}^*\hat{T})^{\frac{1}{2}},$ }

\leftline{where $\hat{T}=(1+T^*T)^{-\frac{1}{2}}T^*\in \mathcal{B}$.}
\leftline{Therefore, by lemma \ref{llt}, $\psi^{-1}_T=\psi_{-T}$, $\psi_T(\hat{T})=0$ and $\psi_T(0)=-\hat{T}$.~~~\fbox} 

\begin{theorem}
If $H$, $K$ are complex Hilbert spaces and $\dim K<\infty$,  then
$$d(T,S)=\tanh ^{-1}\|L_T(S)R_{S}(T)^{-1}\|.$$
defines a metric on $C(H,K)$. This metric satisfies the next equality.
$$d(T,S)=K_{\mathcal{B}}(\hat{T},\hat{S})$$
\end{theorem}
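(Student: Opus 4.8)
The plan is to reduce the entire statement to the Kobayashi pseudo-metric on the ball $\mathcal{B}$ by means of the M\"obius map $\psi_T$, and then transport its pseudo-metric axioms back to $C(H,K)$. The first step is to identify the operator appearing in the definition of $d$ with a single value of $\psi_T$. By Lemma \ref{mobius}, for $X\in\mathcal{B}$ one has $\psi_T(X)=L_T(Y)R_Y(T)^{-1}$ with $Y=(I-X^*X)^{-1/2}X^*$, and $\psi_T$ is of the canonical form (\ref{linear transformation}) with parameter $\hat{T}$. The decisive observation is that the substitution $X=\hat{S}$ forces $Y=S$: by the Remark the bounded transform $T\mapsto\hat{T}$ and the map $A\mapsto A_0=(I-A^*A)^{-1/2}A^*$ are mutually inverse bijections between $C(H,K)$ and $\mathcal{B}$, so $Y=(\hat{S})_0=S$. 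Consequently
$$L_T(S)R_S(T)^{-1}=\psi_T(\hat{S}),\qquad\text{hence}\qquad d(T,S)=\tanh^{-1}\|\psi_T(\hat{S})\|.$$

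Next I would evaluate the right-hand side of the asserted equality. Since $\omega(0,t)=\tanh^{-1}t$ for $0\le t<1$, Theorem \ref{all metric} gives $K_{\mathcal{B}}(0,Z)=\tanh^{-1}\|Z\|$ for every $Z\in\mathcal{B}$. By Lemma \ref{mobius} together with Lemma \ref{llt}(ii), the map $\psi_T$ is a bi-holomorphic self-map of $\mathcal{B}$ satisfying $\psi_T(\hat{T})=0$. Therefore Theorem \ref{iso} applies and yields
$$K_{\mathcal{B}}(\hat{T},\hat{S})=K_{\mathcal{B}}(\psi_T(\hat{T}),\psi_T(\hat{S}))=K_{\mathcal{B}}(0,\psi_T(\hat{S}))=\tanh^{-1}\|\psi_T(\hat{S})\|=d(T,S),$$
which is exactly the claimed identity $d(T,S)=K_{\mathcal{B}}(\hat{T},\hat{S})$.

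Finally I would read off the metric axioms from this identity. Symmetry and the triangle inequality are inherited from $K_{\mathcal{B}}$, whose chain-concatenation definition makes it symmetric (reverse the chain and use $\omega(a,b)=\omega(b,a)$) and subadditive; thus $d(T,S)=d(S,T)$ and $d(T,R)\le d(T,S)+d(S,R)$. For non-degeneracy, $d(T,S)=0$ gives $\|\psi_T(\hat{S})\|=0$, i.e.\ $\psi_T(\hat{S})=0=\psi_T(\hat{T})$; injectivity of $\psi_T$ forces $\hat{S}=\hat{T}$, and injectivity of the bounded transform $T\mapsto\hat{T}$ (again from the Remark) forces $S=T$.

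I expect the one genuinely delicate point to be the identification $Y=S$ obtained from $X=\hat{S}$, which rests on the mutual-inverse property recorded in the Remark and on the well-definedness and boundedness of all the operator factors $(I-X^*X)^{-1/2}$, $R_S(T)^{-1}$, and so on. This is precisely where the hypothesis $\dim K<\infty$ must be invoked: it is what guarantees $\|\hat{T}\|<1$ strictly, the existence of $R_S(T)^{-1}$ established earlier in the section, and that $\psi_T$ maps into the \emph{open} ball, so that $\tanh^{-1}\|\psi_T(\hat{S})\|$ is finite and the expression for $d$ makes sense. I would therefore state the finite-dimensionality usage explicitly at that step rather than leaving it implicit.
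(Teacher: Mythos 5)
Your proposal is correct and follows essentially the same route as the paper: identify $L_T(S)R_S(T)^{-1}$ with $\psi_T(\hat{S})$ via Lemma \ref{mobius}, then run the chain $\tanh^{-1}\|\psi_T(\hat{S})\|=\omega(0,\|\psi_T(\hat{S})\|)=K_{\mathcal{B}}(0,\psi_T(\hat{S}))=K_{\mathcal{B}}(\psi_T(\hat{T}),\psi_T(\hat{S}))=K_{\mathcal{B}}(\hat{T},\hat{S})$ using Theorems \ref{all metric} and \ref{iso}. You actually supply more detail than the paper on the points it calls ``easy to see,'' in particular the non-degeneracy axiom via injectivity of $\psi_T$ and of the bounded transform.
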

\begin{proof}
It is easy to see that, in this case, by theorems \ref{iso}, \ref{all metric} and  lemma \ref{mobius}, $d$ defines a metric on $C(H,K)$. And for each $T,S\in C(H,K)$, we have: 
\begin{eqnarray}
d(T,S)&=&\tanh^{-1}\|L_T(S)R_S(T)^{-1}\|=\omega(0,\|\psi_T(\hat{S})\|)\nonumber\\
~&=&K_{\mathcal{B}}(0,\psi_T(\hat{S}))=K_{\mathcal{B}}(\psi_T(\hat{T}),\psi_T(\hat{S}))=K_{\mathcal{B}}(\hat{T},\hat{S}),\nonumber
\end{eqnarray}
where $\psi_T$ is as in lemma \ref{mobius}.
\end{proof}

\section{Complex symmetric operators from $H$ to $K$}
Suppose that $H$ and $K$ are two complex separable 
Hilbert space endowed with a conjugation pair $(C_1,C_2)$ from $H$ to $K$. Specifically, this means that $C_1:H\rightarrow K$ and $C_2:K\rightarrow H$ are conjugate linear operators that is $C_2C_1=id_H$ or $C_1C_2=id_K$, and they are involutive with each other, meaning that $\langle C_1x,y\rangle_K=\langle C_2y,x\rangle_H$ holds for all $x\in H$ and $y\in K$. 

An operator $T:H\rightarrow K$ is called $(C_1,C_2)$-symmetric if $C_2T=T^*C_1$ whenever $C_2C_1=id_H$, and $TC_2=C_1T^*$ whenever $C_1C_2=id_K$. More generally, $T$ is called complex symmetric if it is $(C_1,C_2)$-symmetric  with respect to some conjugation pair $(C_1,C_2)$. 

In the following, we let $CSO_u(H,K)$ and $CSO_b(H,K)$ denote the set of all complex symmetric operators of $C(H,K)$ and $B(H,K)$, respectively.

The terminology arises from  the fact that if $H=K$ and $C_1=C_2=C$, $T$ is $(C,C)$-symmetric if and only if it is $C$-symmetric. In particular, an $n\times m$ ($n\geq m$) matrix $T$ contains a symmetric $m\times m$ block if and if $C_2T=T^*C_1$ where $C_1$ and $C_2$ are define by:
\begin{eqnarray}
C_1(z_1,\dots,z_m)&=&(\overline{z_1},\overline{z_2},\dots ,\overline{z_m},0,\dots , 0),\nonumber\\
C_2(z_1,\dots,z_n)&=&(\overline{z_1},\overline{z_2},\dots ,\overline{z_m}),\nonumber
\end{eqnarray}
for $z_i\in\mathbb{C}$.

\begin{lemma}
Every closed densely-defined operator from a Hilbert space $H$ to a Hilbert space $K$, has a complex symmetric extension.
\end{lemma}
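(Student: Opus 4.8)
\emph{Plan.} The plan is to build the extension by \emph{doubling}. The first obstruction is that a conjugation pair between $H$ and $K$ need not exist when $\dim H\neq\dim K$ (a conjugation pair forces $\dim H=\dim K$), so I would first pass to the enlargements $\widetilde H=H\oplus K$ and $\widetilde K=K\oplus H$, which are isometrically isomorphic and hence carry a genuine conjugation pair in both directions. Concretely, fix any conjugations $J_H$ on $H$ and $J_K$ on $K$ (these always exist: conjugate the coordinates with respect to an orthonormal basis), and define $\mathcal C_1:\widetilde H\to\widetilde K$ and $\mathcal C_2:\widetilde K\to\widetilde H$ by
\[
\mathcal C_1(x,y)=(J_Ky,\,J_Hx),\qquad \mathcal C_2(u,v)=(J_Hv,\,J_Ku).
\]
A direct computation using $J_H^2=\mathrm{id}_H$ and $J_K^2=\mathrm{id}_K$ gives $\mathcal C_2\mathcal C_1=\mathrm{id}_{\widetilde H}$ and $\mathcal C_1\mathcal C_2=\mathrm{id}_{\widetilde K}$, while the identity $\langle Jp,q\rangle=\langle Jq,p\rangle$, valid for any conjugation, yields the pairing $\langle \mathcal C_1 a,b\rangle_{\widetilde K}=\langle \mathcal C_2 b,a\rangle_{\widetilde H}$. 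Hence $(\mathcal C_1,\mathcal C_2)$ is a conjugation pair from $\widetilde H$ to $\widetilde K$.

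Next I would produce the candidate extension. Writing $S:=J_HT^*J_K:K\to H$, which is linear as the composite of two conjugations and one linear map, I set
\[
\widetilde T(x,y)=(Tx,\,Sy),\qquad (x,y)\in \mathcal D(T)\oplus J_K\mathcal D(T^*).
\]
Because $T$ is closed and densely defined, $T^*$ is again closed and densely defined, so $J_K\mathcal D(T^*)$ is dense in $K$ and $\mathcal D(\widetilde T)$ is dense in $\widetilde H$; moreover $S$ is closed, being $T^*$ conjugated by the isometric bijections $J_H,J_K$, so $\widetilde T=T\oplus S$ is closed. Under the canonical isometric embeddings $H\hookrightarrow\widetilde H,\ x\mapsto(x,0)$ and $K\hookrightarrow\widetilde K,\ k\mapsto(k,0)$, one has $\widetilde T(x,0)=(Tx,0)$, so $\widetilde T$ genuinely extends $T$.

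It then remains to check that $\widetilde T$ is $(\mathcal C_1,\mathcal C_2)$-symmetric. Identifying $\widetilde T$ with the block operator $\bigl(\begin{smallmatrix}T&0\\0&S\end{smallmatrix}\bigr):H\oplus K\to K\oplus H$, its adjoint is $\widetilde T^{*}=\bigl(\begin{smallmatrix}T^{*}&0\\0&S^{*}\end{smallmatrix}\bigr):K\oplus H\to H\oplus K$. Evaluating both sides of the required relation $\mathcal C_2\widetilde T=\widetilde T^{*}\mathcal C_1$ on $(x,y)$ gives
\[
\mathcal C_2\widetilde T(x,y)=(J_HSy,\,J_KTx),\qquad \widetilde T^{*}\mathcal C_1(x,y)=(T^{*}J_Ky,\,S^{*}J_Hx),
\]
so the two conditions to verify are $J_HS=T^{*}J_K$ and $J_KT=S^{*}J_H$. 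The first is immediate from $S=J_HT^{*}J_K$ and $J_H^2=\mathrm{id}$. For the second I would compute $S^{*}$ directly from $\langle Sy,w\rangle_H=\langle y,S^{*}w\rangle_K$, using the antiunitarity of $J_H,J_K$ and the defining relation of $T^{*}$, to obtain $S^{*}=J_KTJ_H$; then $S^{*}J_H=J_KTJ_H^2=J_KT$, as needed. Since $\mathcal C_1\mathcal C_2=\mathrm{id}$, the companion relation $\widetilde T\mathcal C_2=\mathcal C_1\widetilde T^{*}$ follows automatically.

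The one genuinely delicate point, and the step I would treat most carefully, is the adjoint bookkeeping with antilinear maps, namely the verification that $S^{*}=J_KTJ_H$ rather than some conjugate thereof: placing the complex conjugations correctly in the inner-product manipulations is where an error is most likely to creep in, together with the need to confirm $\mathcal D(\widetilde T^{*})$ is large enough for the identities to be compared on a common dense domain. A conceptually cleaner but less explicit alternative would take $\widetilde H=\widetilde K=H\oplus K$ and the self-adjoint operator $\bigl(\begin{smallmatrix}0&T^{*}\\T&0\end{smallmatrix}\bigr)$, which extends $T$ and, being self-adjoint, is complex symmetric by the spectral theorem; I prefer the explicit construction above precisely because it avoids invoking self-adjointness of an off-diagonal block operator.
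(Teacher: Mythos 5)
Your proof is correct and is essentially the paper's argument: both extend $T$ to a block--diagonal operator $T\oplus(\text{a conjugated copy of }T^{*})$ on doubled spaces and make it symmetric with respect to a swap-type conjugation pair. The only real difference is that you double to $H\oplus K\to K\oplus H$ using separate conjugations $J_H$, $J_K$ on each space (which sidesteps the question of whether a conjugation pair exists directly between $H$ and $K$), whereas the paper doubles to $H\oplus H\to K\oplus K$ with a conjugation pair $(C_1,C_2)$ from $H$ to $K$ and the block $C_1T^{*}C_1$; your write-up also supplies the domain, closedness, and adjoint verifications that the paper leaves implicit.
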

\begin{proof}
Let $T\in C(H,K)$ and $(C_1,C_2)$ be an arbitrary conjugation pair from $H$ to $K$. The operator $\tilde{T}$ from $H\oplus H$ to $K\oplus K$ is a $(\tilde{C_1},\tilde{C_2})$-symmetric extension of $T$, where $\tilde{T}$, $\tilde{C_1}$ and $\tilde{C_2}$ are defined by:
\begin{equation}\label{tild}
\tilde{T}=\begin{pmatrix}
T & 0 \\
0 & C_1T^*C_1
\end{pmatrix}, 
~~~\tilde{C_1}=\begin{pmatrix}
0 & C_1 \\
C_1 & 0
\end{pmatrix}, 
~~~\tilde{C_2}\begin{pmatrix}
0 & C_2 \\
C_2 & 0
\end{pmatrix}.
\end{equation}
 This completes the proof.
\end{proof}
\begin{lemma}\label{a}
Let $A$ be a bounded $(C_1,C_2)$-symmetric operator from $K$ to $H$, with $\|A\|<1$. Then $T=(I-A^*A)^{-\frac{1}{2}}A^*$ is a closed densely-defined complex symmetric operator from $H$ to $K$. 
\end{lemma}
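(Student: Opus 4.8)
The plan is to split the statement into its two assertions --- that $T$ is closed and densely defined, and that $T$ is complex symmetric for the pair $(C_1,C_2)$ --- and to treat the first as essentially immediate so that all the content sits in the symmetry identity.

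For the structural part I would just invoke the Remark. Since $\|A\|<1$ we have $A\in\mathcal{B}$ and $\ker(I-A^*A)=\{0\}$, so $T=(I-A^*A)^{-\frac12}A^*=A_0\in C(H,K)$. In fact $\|A\|<1$ forces $I-A^*A\geq(1-\|A\|^2)I>0$, so $(I-A^*A)^{-\frac12}$ is bounded and $T$ is itself bounded; closedness and the density of $\mathcal{D}(T)=H$ are then automatic. Consequently, the whole weight of the lemma lies in proving $C_2T=T^*C_1$, which together with $C_2C_1=id_H$ is exactly the assertion that $T$ is $(C_1,C_2)$-symmetric, hence complex symmetric.

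Because $T$ is bounded and $(I-A^*A)^{-\frac12}$ is self-adjoint, $T^*=A(I-A^*A)^{-\frac12}$. The hypothesis that $A$ is $(C_1,C_2)$-symmetric reads $C_1A=A^*C_2$; multiplying on the right by $C_1$ and using $C_2C_1=id_H$ yields the linear identity $A^*=C_1AC_1$. From this I would record the two intertwining relations $C_2A^*=AC_1$ and $C_2(A^*A)=(AA^*)C_2$, each following by a one-line computation that uses only $C_1A=A^*C_2$ and $C_2C_1=id_H$ (and one checks in passing that both sides are genuine conjugate-linear maps between the correct spaces).

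The crux is then to promote $C_2(A^*A)=(AA^*)C_2$ to a relation between the functional calculi of $A^*A$ and $AA^*$. Since $C_2$ is conjugate-linear and bounded and $f(t)=(1-t)^{-1/2}$ is real-valued and continuous on $[0,\|A\|^2]\subset[0,1)$, I would verify $C_2(A^*A)^n=(AA^*)^nC_2$ by induction, deduce $C_2\,p(A^*A)=p(AA^*)\,C_2$ for every real polynomial $p$ (the real coefficients are what make conjugate-linearity harmless here), and pass to the limit under uniform approximation of $f$ on the common spectrum, obtaining $C_2(I-A^*A)^{-\frac12}=(I-AA^*)^{-\frac12}C_2$. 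Combining this with $C_2A^*=AC_1$ and the standard intertwining $A(I-A^*A)^{-\frac12}=(I-AA^*)^{-\frac12}A$ (itself from $A(A^*A)^n=(AA^*)^nA$ and approximation) gives
$$C_2T=(I-AA^*)^{-\frac12}C_2A^*=(I-AA^*)^{-\frac12}AC_1=A(I-A^*A)^{-\frac12}C_1=T^*C_1,$$
which is the desired identity. The step I expect to be the main obstacle is precisely this functional-calculus transfer: one must respect that $C_2$ is conjugate-linear, so the transfer is clean only for real-valued $f$, and one must use $\|A\|<1$ to keep the spectra of $A^*A$ and $AA^*$ inside $[0,1)$ so that $(1-t)^{-\frac12}$ is continuous there.
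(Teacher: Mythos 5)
Your structural paragraph is fine: with $\|A\|<1$ strictly we have $I-A^*A\geq(1-\|A\|^2)I$, so $(I-A^*A)^{-\frac12}$ is bounded, $T$ is bounded and everywhere defined, and closedness is automatic, consistent with the paper's Remark (the paper nonetheless runs a closedness argument via $\|Tx\|^2+\|x\|^2=\|(I-AA^*)^{-\frac12}x\|^2$). The gap is in the symmetry half. You aim at the stronger conclusion that $T$ is symmetric with respect to the \emph{original} pair, and every step after ``the hypothesis reads $C_1A=A^*C_2$'' cancels the composition of the two conjugations on the $H$ side to the identity. But a conjugation pair is only required to satisfy \emph{one} of the two identities. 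In the paper's labelling the pair attached to $A:K\to H$ has $C_1:K\to H$, $C_2:H\to K$ with $C_2C_1=id_K$ (your $C_1,C_2$ are swapped relative to this); this is the only case that can occur when $\dim K<\dim H$, i.e.\ in the regime of the main theorem, and there the other composition $C_1C_2$ is merely the orthogonal projection of $H$ onto the range of $C_1$. Your auxiliary relations ($C_2A^*=AC_1$ and the intertwining of $A^*A$ with $AA^*$ through the conjugation) are then not derivable: for instance $(C_2A)^2=A^*C_1C_2A=A^*PA$, which is not $A^*A$, so the functional-calculus transfer has nothing to act on.

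Worse, the strengthened claim is false. Take $H=\mathbb{C}^3$, $K=\mathbb{C}^2$, $C_1(z_1,z_2)=(\bar z_1,\bar z_2,0)$, $C_2(w_1,w_2,w_3)=(\bar w_1,\bar w_2)$, and let $A$ be the $3\times2$ matrix with rows $(0,s),(s,0),(u,0)$ where $0<|s|^2+|u|^2<1$ and $u\neq0$. Then $(C_1,C_2)$ is a conjugation pair with $C_2C_1=id_K$, and $C_2A=A^*C_1$ holds because the top $2\times2$ block of $A$ is symmetric; but $A^*A=\operatorname{diag}(|s|^2+|u|^2,\,|s|^2)$ does not commute with the conjugate-linear map $C_2A:z\mapsto(\bar s\bar z_2,\bar s\bar z_1)$, and a direct computation gives $TC_1\neq C_2T^*$ (the same failure occurs for the identity $C_1T=T^*C_2$, which is your target after relabelling). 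So the actual content of the lemma is that one must manufacture a \emph{new} conjugation pair: the paper sets
$$\mathcal{C}_1=(I-A^*C_1C_2A)^{-\frac12}C_2(I-AA^*)^{\frac12},\qquad \mathcal{C}_2=(I-AA^*)^{\frac12}C_1(I-A^*C_1C_2A)^{-\frac12},$$
and verifies $\mathcal{C}_1\mathcal{C}_2=id_K$ and $T\mathcal{C}_2=\mathcal{C}_1T^*$. Your argument is valid only in the degenerate case where both compositions are identities (e.g.\ $H=K$ and $C_1=C_2$ a genuine conjugation), where the paper's twisted pair collapses back to the original one; that case does not cover the situation the lemma is used for.
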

\begin{proof}
For $x\in\mathcal{D}(T)$, we compute
\begin{eqnarray}\label{c}
\|Tx\|^2+\|x\|^2&=&\langle AA^*(I-AA^*)^{-\frac{1}{2}}x,(I-AA^*)^{-\frac{1}{2}}x\rangle+\langle x,x\rangle\nonumber\\
~&=&-\langle (I-AA^*)(I-AA^*)^{-\frac{1}{2}}x,(I-AA^*)^{-\frac{1}{2}}x\rangle +\langle x,x\rangle\nonumber\\
~&~&+\langle (I-AA^*)^{-\frac{1}{2}}x,(I-AA^*)^{-\frac{1}{2}}x\rangle\nonumber\\
~&=&\|(I-AA^*)^{-\frac{1}{2}}x\|^2.
\end{eqnarray}
We prove that $T$ is closed. Suppose that $x_n\rightarrow x$ and $Tx_n\rightarrow y$ for a sequence of vectors $x_n\in\mathcal{D}(T)$. Then, by (\ref{c}), $\{(I-AA^*)^{-\frac{1}{2}}x_n\}$ is a Cauchy sequence, so it converges in $H$. Since $(I-AA^*)^{-\frac{1}{2}}$ is self-adjoint and hence closed, we get $x\in\mathcal{D}((I-AA^*)^{-\frac{1}{2}})=\mathcal{D}(T)$ and $(I-AA^*)^{-\frac{1}{2}}x_n\rightarrow (I-AA^*)^{-\frac{1}{2}}x$. Hence, 
$$Tx_n=A^*(I-AA^*)^{-\frac{1}{2}}x_n\rightarrow A^*(I-AA^*)^{-\frac{1}{2}}x=Tx=y.$$
This proves that the operator $T$ is closed. Clearly, $T$ is densely-defined, because the self-adjoint operator $(I-AA^*)^{-\frac{1}{2}}$ is densely-defined. Therefore, $T\in C(H,K)$. Now, we are ready to prove that $T$ is symmetric.

Let $C_2C_1=id_K$, and define $\mathcal{C}_1$ and $\mathcal{C}_2$ settings as:
\begin{eqnarray}
\mathcal{C}_1&=&(I-A^*C_1C_2A)^{-\frac{1}{2}}C_2(I-AA^*)^\frac{1}{2},\nonumber\\
\mathcal{C}_2&=&(I-AA^*)^\frac{1}{2}C_1(I-A^*C_1C_2A)^{-\frac{1}{2}}.\nonumber
\end{eqnarray}
For each $x\in K$ and $y\in H$, $\|C_1x\|=\|x\|$ and $\|C_2y\|\leq \|y\|$. So, $I-A^*C_1C_2A$ is positive self-adjoint and $\ker(I-A^*C_1C_2A)=\{0\}$. Because, $\|A\|<1$ and if $A^*C_1C_2Ax=x$ then
\begin{eqnarray}
\langle x,x\rangle &=& \langle A^*C_1C_2Ax,x\rangle\nonumber\\
~&=&\langle C_1C_2Ax,Ax\rangle\nonumber\\
~&=&\langle C_2Ax,C_2Ax\rangle\nonumber\\
~&=&\| C_2Ax \|^2\leq \| Ax\|^2,\nonumber
\end{eqnarray}
which means $\|Ax\|=\|x\|$ and $x$ must be zero. This proves that $\mathcal{C}_1$ and $\mathcal{C}_2$ are well-define. It is obviouse that $(\mathcal{C}_1,\mathcal{C}_2)$ is a conjugation pair from $H$ to $K$ and $T$ is a $(\mathcal{C}_1,\mathcal{C}_2)$-symmetric operator. Because:
\begin{eqnarray}
\mathcal{C}_1\mathcal{C}_2&=&(I-A^*C_1C_2A)^{-\frac{1}{2}}C_2(I-AA^*)C_1(I-A^*C_1C_2A)^{-\frac{1}{2}}\nonumber\\
~&=&(I-A^*C_1C_2A)^{-\frac{1}{2}}(C_2C_1-A^*C_1C_2A)(I-A^*C_1C_2A)^{-\frac{1}{2}}=id_K,\nonumber
\end{eqnarray} 
and
\begin{eqnarray}
T\mathcal{C}_2&=&(I-A^*A)^{-\frac{1}{2}}A^*(I-AA^*)^\frac{1}{2}C_1(I-A^*C_1C_2A)^{-\frac{1}{2}}\nonumber\\
~&=&A^*C_1(I-A^*C_1C_2A)^{-\frac{1}{2}}\nonumber\\
~&=&C_2A(I-A^*C_1C_2A)^{-\frac{1}{2}}\nonumber\\
~&=&(I-C_2AA^*C_1)^{-\frac{1}{2}}C_2A\nonumber\\
~&=&(I-A^*C_1C_2A)^{-\frac{1}{2}}C_2(I-AA^*)^\frac{1}{2}(I-AA^*)^{-\frac{1}{2}}A\nonumber\\
~&=&\mathcal{C}_1T^*.\nonumber
\end{eqnarray}
\end{proof}

 We are now ready to prove the main result of this section. Our approach is inspired by the arguments of \cite{ez}. 
\begin{theorem}
If $H$, $K$ are complex separable Hilbert spaces, $H$ is infinite-dimensional and $\dim K<\infty$, then 
$$\overline{CSO}^{d}_u=C(H,K).$$
\end{theorem}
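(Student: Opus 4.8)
The plan is to transport the whole problem to the open unit ball $\mathcal{B}$ of $B(K,H)$ through the map $T\mapsto\hat{T}=(I+T^*T)^{-\frac12}T^*$. By the preceding metric theorem $d(T,S)=K_{\mathcal{B}}(\hat{T},\hat{S})$, and by the Remark this map is a bijection of $C(H,K)$ onto $\mathcal{B}$ with inverse $A\mapsto A_0=(I-A^*A)^{-\frac12}A^*$. Hence $T\mapsto\hat{T}$ is a surjective isometry of $(C(H,K),d)$ onto $(\mathcal{B},K_{\mathcal{B}})$, so $\overline{CSO}^{d}_u=C(H,K)$ is equivalent to the $K_{\mathcal{B}}$-density in $\mathcal{B}$ of $\{\hat{T}:T\in CSO_u\}$. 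Moreover, by Lemma~\ref{a} together with the Remark, whenever $A\in\mathcal{B}$ is complex symmetric the operator $A_0$ lies in $CSO_u$ and $\hat{A_0}=A$; thus every complex symmetric $A\in\mathcal{B}$ already lies in $\{\hat{T}:T\in CSO_u\}$. It therefore suffices to prove that the complex symmetric elements of $\mathcal{B}$ are $K_{\mathcal{B}}$-dense in $\mathcal{B}$.

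First I would record a soft passage from the metric to the norm. By Theorem~\ref{all metric} one has $K_{\mathcal{B}}(0,x)=\omega(0,\|x\|)$, and by Theorem~\ref{iso} each M\"obius map $\psi_S$ of Lemma~\ref{mobius} is a $K_{\mathcal{B}}$-isometry carrying $\hat{S}$ to $0$. Writing $K_{\mathcal{B}}(A,A')=K_{\mathcal{B}}(0,\psi_{A_0}(A'))=\omega(0,\|\psi_{A_0}(A')\|)$ and using that $\psi_{A_0}$ is norm-continuous on $\mathcal{B}$ (being bi-holomorphic, Lemma~\ref{llt}), one sees that $K_{\mathcal{B}}$ induces the norm topology on the interior; in particular a norm-convergent sequence $A_n\to A$ inside $\mathcal{B}$ is $K_{\mathcal{B}}$-convergent. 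This would let me replace $K_{\mathcal{B}}$-density by ordinary norm-density, the statement actually amenable to construction.

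The heart of the argument is then an explicit production of complex symmetric elements of $\mathcal{B}$, and this is where I expect the difficulty to concentrate. Fix $A\in\mathcal{B}$. Because $\dim K=m<\infty$, the operator $A$ has finite rank and its range is a subspace of $H$ of dimension at most $m$; since $\dim H=\infty$, the orthogonal complement of the range is infinite dimensional. The plan is to choose an orthonormal basis $e_1,\dots,e_m$ of $K$ and orthonormal vectors $g_1,\dots,g_m$ in the orthogonal complement of the range of $A$, and to define conjugate-linear maps $C_1:K\to H$, $C_2:H\to K$ by $C_1e_k=g_k$ and $C_2g_k=e_k$, with $C_2$ vanishing on the orthogonal complement of $\mathrm{span}\{g_1,\dots,g_m\}$. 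One then checks that $(C_1,C_2)$ is an admissible conjugation pair, i.e. $C_2C_1=id_K$ and $\langle C_1x,y\rangle_H=\langle C_2y,x\rangle_K$, and that the symmetry relation $C_2A=A^*C_1$ holds: each $g_k$ lies in the kernel of $A^*$, so $A^*C_1=0$, while $C_2A=0$ because the range of $A$ is orthogonal to every $g_k$. This would exhibit $A$ itself as complex symmetric, giving $\{\hat{T}:T\in CSO_u\}=\mathcal{B}$ and hence the theorem (indeed with the stronger conclusion $CSO_u=C(H,K)$, making the second paragraph unnecessary). The genuinely delicate point, and the main obstacle, is the simultaneous verification that this range-adapted pair is legitimate and certifies complex symmetry, which is precisely where $\dim K<\infty$ and $\dim H=\infty$ are used (enough room to place $g_1,\dots,g_m$ orthogonal to the range); should the intended notion instead fix the conjugation pair in advance, the same mechanism must be run approximately — perturbing $A$ within $\mathcal{B}$ to align its range with the available conjugation and invoking the norm-to-$K_{\mathcal{B}}$ comparison above to conclude density rather than equality.
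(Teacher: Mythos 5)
Your first two paragraphs are sound and consistent with what the paper does implicitly: $T\mapsto\hat T$ is an isometric bijection of $(C(H,K),d)$ onto $(\mathcal B,K_{\mathcal B})$ by the metric theorem and the Remark, Lemma~\ref{a} pulls complex symmetric elements of $\mathcal B$ back into $CSO_u$, and the identity $K_{\mathcal B}(A,A')=\omega(0,\|\psi_{A_0}(A')\|)$ does reduce $K_{\mathcal B}$-density to norm-density. The problem is concentrated in your third paragraph. Your range-adapted pair ($C_1e_k=g_k$ with $g_k\perp\mathrm{ran}\,A$, and $C_2$ the partial inverse vanishing on $\mathrm{span}\{g_k\}^{\perp}$) does satisfy every condition the paper literally writes down --- conjugate linearity, $C_2C_1=id_K$, the involution identity --- and is structurally the same as the paper's own $n\times m$ example; and indeed $C_2A=0=A^*C_1$. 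But observe what you have then proved: \emph{every} element of $\mathcal B$, hence every $T\in C(H,K)$, is complex symmetric, i.e.\ $CSO_u=C(H,K)$ outright, with no closure and no metric needed. A reading of the definition under which the theorem is vacuous, and under which the paper's entire apparatus (the truncations $A_n$ of $\hat T$, the symmetric extension $\tilde{A_n}$ of (\ref{tild}), the SOT-to-WOT-to-norm upgrade of $\psi_T(\tilde{A_n})\to 0$ using $\dim K<\infty$ and Lemma~\ref{llt}(iii), and Lemma~\ref{a} to return to $C(H,K)$) is pointless, cannot be the intended one. The intent --- visible in the paper's example, where $(C_1,C_2)$-symmetry of an $n\times m$ matrix is the nontrivial condition that its upper $m\times m$ block be symmetric, and in Lemma~\ref{a}, whose constructed pair satisfies $\mathcal C_1\mathcal C_2=id_K$ with both factors injective --- is that admissible pairs must not be allowed to annihilate the operator. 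Your construction exploits a gap in the written definition rather than proving the theorem as meant.

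Once such degenerate pairs are excluded, your proposal has no argument left: the fallback in your last sentence (``perturbing $A$ to align its range with the available conjugation'') is exactly where all the work lies, and it is precisely what the paper carries out --- truncate $\hat T$ to $A_n$, pass to the genuinely complex symmetric $T_n=(I-\tilde{A_n}^*\tilde{A_n})^{-\frac12}\tilde{A_n}^*$ via Lemma~\ref{a}, and show $d(T_n,T)=\tanh^{-1}\|\psi_T(\tilde{A_n})\|\to 0$. As written, either you are proving the stronger and trivializing equality $CSO_u=C(H,K)$ (in which case you should say so and flag the definitional issue explicitly), or the core approximation step of the proof is missing.
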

\begin{proof}
Let $T\in C(H,K)$, fix an orthonormal basis $\alpha=\{f_1,\dots,f_t\}$ of $K$ and an orthonormal basis $\beta=\{e_1,e_2,\dots\}$ of $H$, and let $H_n=\langle e_1,\dots e_n\rangle$. Define $A_n\in B(K,H_n)$ by insisting that $\langle A_nf_k,e_j\rangle=\langle\hat{T}f_k,e_j\rangle$, for $1\leq k\leq t$ and $1\leq j\leq n$. In other words, $A_n$ is simply the upper-left $n\times k$ principal submatrix of the matrix representation of $\hat{T}$ with respect to $\alpha$ and $\beta$. Let $(C_n^1,C_n^2)$ be an arbitrary conjugation pair from $K$ to $H$ and observe that the operator $\tilde{A_n}$ from $K\oplus K$ to $H\oplus H$ is complex symmetric by \ref{tild}. Since $n>i$ implies that
$$\|\hat{T}f_i-\tilde{A_n}f_i\|^2=\sum_{j=n+1}^\infty |\langle\hat{T}f_i,e_j\rangle|^2,$$
it follows that $\tilde{A_n}f_i\rightarrow \hat{T}f_i$ for each fixed $i$. Since $\|\tilde{A_n}\|\leq \|\hat{T}\|$ by construction, it follows from proposition IX. 1.3. (d) nof \cite{conway} that $\tilde{A_n}\rightarrow\hat{T}$ (SOT). Since the weak-operator topology is weaker than the strong-operator topology, we also have the convergence in the weak-operator topology. Therefore, by lemma\ref{llt} and lemma\ref{mobius}, $\psi_T(\tilde{A_n})\rightarrow 0$ (WOT), and hence $\lim_n\|\psi_T(\tilde{A_n})\|=0$. Because $K$ is finite dimensional and 
$$\|\psi_T(\tilde{A_n})\|\leq \sum_{i=1}^t\|\psi_T(\tilde{A_n})f_i\|.$$
Insert $T_n=(I-\tilde{A_n}^*\tilde{A_n})^{-\frac{1}{2}}\tilde{A_n}^*$. By lemma \ref{mobius}, we have 
\begin{eqnarray}
d(T_n,T)&=&\tanh^{-1}\|L_{T_n}(T)R_{T}(T_n)^{-1}\|\nonumber\\
~&=&\tanh^{-1}\|\psi_T(\hat{T_n})\| =\tanh^{-1}\|\psi_T(\tilde{A_n})\|.\nonumber
\end{eqnarray}
 This implies that $\lim_nd(T_n,T)=0$. By lemma \ref{a}, $T_n$ is complex symmetric operator, and this completes the proof.
\end{proof}
Now, we consider the weak operator topology (WOT), strong operator topology (SOT), and strong-* topology (SST), on $B(H,K)$. You should note that by using the proof of previous theorem, $\overline{CSO}^{SST}_b =B(H,K)$. Since the strong and weak operator topologies are both weaker than the strong-* topology, we have the next corollary. 

\begin{corollary}
If $H$, $K$ are complex separable, infinite-dimensional Hilbert spaces, then
$$\overline{CSO}^{(SST)}_b=\overline{CSO}^{(SOT)}_b=\overline{CSO}^{WOT}_b=B(H,K)$$
\end{corollary}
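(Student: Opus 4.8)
The plan is to prove the single equality $\overline{CSO}^{(SST)}_b=B(H,K)$ and then obtain the other two closures for free from the ordering of the three topologies. Recall that on $B(H,K)$ the strong-$*$ topology is generated by the seminorms $X\mapsto\|Xh\|$ ($h\in H$) together with $X\mapsto\|X^{*}k\|$ ($k\in K$), the strong operator topology by the first family alone, and the weak operator topology by the seminorms $X\mapsto|\langle Xh,k\rangle|$; hence SST is finer than SOT, which is finer than WOT, and a finer topology has a smaller closure. Once the strong-$*$ density is established, this yields the chain
\[
\overline{CSO}^{(SST)}_b\subseteq\overline{CSO}^{(SOT)}_b\subseteq\overline{CSO}^{WOT}_b\subseteq B(H,K),
\]
and since the left-hand side equals $B(H,K)$, equality holds throughout. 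So everything reduces to the strong-$*$ statement.

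For the strong-$*$ step I would mimic the construction in the proof of the previous theorem, but approximate the bounded operator $T$ directly rather than through the transform $\hat{T}$, since the metric $d$ is unavailable when $\dim K=\infty$. Fix orthonormal bases $\{e_j\}$ of $H$ and $\{f_k\}$ of $K$, let $P_n,Q_n$ be the orthogonal projections onto $H_n=\langle e_1,\dots,e_n\rangle$ and $K_n=\langle f_1,\dots,f_n\rangle$, and set $B_n=Q_nTP_n$, a finite-rank operator with $\|B_n\|\le\|T\|$. Because $P_n\to I$ and $Q_n\to I$ strongly and $\|Q_n\|=1$, one checks directly that $B_n h\to Th$ and $B_n^{*}k\to T^{*}k$ for every $h,k$, that is, $B_n\to T$ in SST. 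The operators $B_n$ need not be complex symmetric, so I would symmetrize each of them by the doubling construction (\ref{tild}): using that both $H$ and $K$ are infinite-dimensional, choose finite-dimensional subspaces $H_n'\subseteq H_n^{\perp}$ and $K_n'\subseteq K_n^{\perp}$ with $\dim H_n'=\dim H_n$ and $\dim K_n'=\dim K_n$, place the partner block $C_1B_n^{*}C_1$ on $H_n'\to K_n'$, and set everything else to $0$. The resulting $S_n\in B(H,K)$ is $(\mathcal{C}_1,\mathcal{C}_2)$-symmetric for the conjugation pair swapping $H_n\leftrightarrow H_n'$ and $K_n\leftrightarrow K_n'$, exactly as in (\ref{tild}), with $\|S_n\|\le\|T\|$.

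It remains to see $S_n\to T$ in SST. Since $S_n$ agrees with $B_n$ on $H_n$ while the partner block lives on the disjoint subspace $H_n'\subseteq H_n^{\perp}$, taking $H_n'=\langle e_{n+1},\dots,e_{2n}\rangle$ makes the $H_n'$-component of any fixed $h$ a tail of its Fourier expansion, which tends to $0$; together with $B_n h\to Th$ this gives $S_n h\to Th$. The construction is symmetric in $T$ and $T^{*}$ — this is precisely what the doubling buys us — so the same estimate gives $S_n^{*}k\to T^{*}k$, and hence $S_n\to T$ in the strong-$*$ topology. As $\|S_n\|\le\|T\|$ and $H,K$ are separable, the relevant ball is SST-metrizable and the sequence $\{S_n\}$ certifies $T\in\overline{CSO}^{(SST)}_b$; since $T\in B(H,K)$ was arbitrary, $\overline{CSO}^{(SST)}_b=B(H,K)$. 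The only genuine difficulty is the two-sided (strong-$*$) nature of the convergence: one must arrange the symmetric completion so that it disturbs neither $S_n h$ nor $S_n^{*}k$ on the finitely many prescribed vectors, which is exactly where the infinite-dimensionality of both $H$ and $K$ is used.
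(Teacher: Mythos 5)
Your proposal is correct and follows essentially the same route as the paper: truncate to a finite block, complete it to a complex symmetric operator via the doubling construction (\ref{tild}), verify strong-$*$ convergence, and then deduce the SOT and WOT statements from the ordering of the topologies. You in fact supply two details the paper's one-line justification glosses over --- approximating $T$ directly instead of $\hat{T}$ (necessary since the metric machinery of the previous theorem requires $\dim K<\infty$), and realizing the doubled operator, which naturally lives on $H\oplus H\to K\oplus K$, back inside $B(H,K)$ by placing the partner block on finite-dimensional subspaces of $H_n^{\perp}$ and $K_n^{\perp}$ --- both of which are exactly what is needed to make the paper's ``by using the proof of the previous theorem'' rigorous.
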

 
\end{document}